\def\h{0.577350269189} 
\title{A Slight Improvement to the Colored B\'{a}r\'{a}ny's Theorem}
\author{Zilin Jiang \begin{CJK}{UTF8}{gkai}姜子麟\end{CJK}\footnote{Department of Mathematical Sciences, Carnegie Mellon University, Pittsburgh, PA 15213. Supported in part by U.S. taxpayers through NSF grants DMS-1201380 and DMS-1301548.}\footnotemark[1]}
\date{}
\newtheorem*{maintheorem}{Main Theorem}
\newtheorem{theorem}{Theorem}
\newtheorem{corollary}[theorem]{Corollary}
\renewcommand{\leq}{\leqslant}
\renewcommand{\geq}{\geqslant}
\newcommand{\abs}[1]{\left|#1\right|}
\newcommand{\norm}[1]{\left|\left|#1\right|\right|}
\newcommand{\<}{\langle}
\renewcommand{\>}{\rangle}
\newcommand{\RR}{\mathbb{R}}
\newcommand{\prob}[1]{\mathsf{Prob}\left(#1\right)}
\newcommand{\cl}{\mathrm{cl}}
\renewcommand{\int}{\mathrm{int}}
\newcommand{\CC}{\mathcal{C}}
\newcommand{\NN}{\mathbb{N}}
\newcommand{\KK}{\mathsf{K}}
\newcommand{\TT}{\mathcal{T}}
\newcommand{\KKK}{\mathcal{K}}
\newcommand{\LL}{\mathcal{L}}
\newcommand{\mm}{\mathbf{m}}
\newcommand{\cone}{\mathrm{cone}}
\newcommand{\conv}{\mathrm{conv}}
\newcommand{\modtwo}{\mathrm{mod2}}
\begin{document}

\maketitle

\begin{abstract}
Suppose $d+1$ absolutely continuous probability measures $m_0, \ldots, m_d$ on $\RR^d$ are given. In this paper, we prove that there exists a point of $\RR^d$ that belongs to the convex hull of $d+1$ points $v_0, \ldots, v_d$ with probability at least $\frac{2d}{(d+1)!(d+1)}$, where each point $v_i$ is sampled independently according to probability measure $m_i$.
\end{abstract}

\section{Introduction}

Let $P\subset \RR^d$ be a set of $n$ points. Every $d+1$ of them span a simplex, for a total of $n\choose {d+1}$ simplices. The point selection problem asks for a point contained in as many simplices as possible. Boros and F\"{u}redi \cite{MR771183} showed for $d=2$ that there always exists a point in $\RR^2$ contained in at least $\frac{2}{9}{n\choose 3}-O(n^2)$ simplices. A short and clever proof of this result was given by Bukh \cite{MR2240753}. B\'{a}r\'{a}ny \cite{MR676720} generalized this result to higher dimensions:

\begin{theorem}[B\'{a}r\'{a}ny \cite{MR676720}]\label{thm: first-selection}
  There exists a point in $\RR^d$ that is contained in at least $c_d{n\choose {d+1}}-O(n^d)$ simplices, where $c_d > 0$ is a constant depending only on the dimension $d$.
\end{theorem}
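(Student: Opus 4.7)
The plan is to combine Tverberg's theorem with the Colored Carath\'{e}odory theorem, both of which are centerpieces of 1980s combinatorial convexity.

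\textbf{Step 1 (Tverberg partition).} Set $r := \lfloor (n-1)/(d+1)\rfloor + 1$, the largest integer with $(d+1)(r-1)+1 \leq n$. Applying Tverberg's theorem to any $(d+1)(r-1)+1$ points of $P$, one partitions them into $r$ pairwise disjoint subsets $T_1, \ldots, T_r \subseteq P$ whose convex hulls share a common point $p \in \bigcap_{i=1}^r \conv(T_i)$.

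\textbf{Step 2 (Colored Carath\'{e}odory on part selections).} For every $(d+1)$-element index set $I \subseteq \{1,\ldots,r\}$, the family $\{T_i\}_{i \in I}$ provides $d+1$ color classes, each containing $p$ in its convex hull. The Colored Carath\'{e}odory theorem then yields $v_i \in T_i$ ($i \in I$) with $p \in \conv\{v_i : i \in I\}$, i.e., a simplex on $d+1$ points of $P$ that contains $p$.

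\textbf{Step 3 (Counting).} Since the $T_i$ are pairwise disjoint, distinct index sets $I$ produce simplices with genuinely distinct vertex sets. One thereby exhibits at least
\[
\binom{r}{d+1} = \frac{r(r-1)\cdots(r-d)}{(d+1)!} \geq \frac{1}{(d+1)^{d+1}}\binom{n}{d+1} - O(n^d)
\]
simplices containing $p$, proving the theorem with the explicit constant $c_d = 1/(d+1)^{d+1}$.

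\textbf{Main obstacle.} The nontrivial ingredient is Tverberg's theorem itself; once it is available, the Colored Carath\'{e}odory application and the subsequent counting are essentially routine. A tempting shortcut via the classical Centerpoint Theorem appears to fail, because a centerpoint is guaranteed to lie in the convex hull only of subsets of size exceeding $dn/(d+1)$, and one cannot carve out $\Theta(n)$ pairwise disjoint such witnesses from $P$. Tverberg's partition is precisely what delivers $\Theta(n)$ suitable disjoint color classes in a single stroke.
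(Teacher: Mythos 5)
The paper does not prove this theorem; it states it as a cited result from B\'ar\'any's paper \cite{MR676720} and treats it as known background. Your argument is correct, and it is in essence B\'ar\'any's own proof: his 1982 paper introduces the Colorful Carath\'eodory theorem and combines it with Tverberg's theorem in precisely the way you describe, so there is no genuine divergence to report. One small point on the constant is worth flagging: counting $(d+1)$-element index sets of Tverberg parts gives $\binom{r}{d+1}$ simplices with $r\approx n/(d+1)$, which yields $c_d\geq (d+1)^{-(d+1)}$ as you compute, whereas the paper's introduction attributes the slightly better bound $c_d\geq (d+1)^{-d}$ to B\'ar\'any's argument. This discrepancy is harmless for the theorem as stated, which only asserts the existence of some positive $c_d$, but be aware that the straight count of index sets does not by itself reach the sharper figure; an extra refinement is needed for the missing factor of $d+1$.
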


This general result, the B\'{a}r\'{a}ny's theorem, is also known as the first selection lemma. We will henceforth denote by $c_d$ the largest possible constant for which the B\'{a}r\'{a}ny's theorem holds true. Bukh, Matou\v{s}ek and Nivasch \cite{MR2579699} used a specific construction called the stretched grid to prove that the constant $c_2=\frac{2}{9}$ in the planar case found by Boros and F\"{u}redi \cite{MR771183} is the best possible. In fact, they proved that $c_d\leq \frac{d!}{(d+1)^{d}}$. On the other hand, B\'{a}r\'{a}ny's proof in \cite{MR676720} implies that $c_d\geq (d+1)^{-d}$, and Wagner \cite{wagner2003k} improved it to $c_d\geq \frac{d^2+1}{(d+1)^{d+1}}$.

Gromov \cite{MR2671284} further improved the lower bound on $c_d$ by topological means. His method gives $c_d\geq\frac{2d}{(d+1)(d+1)!}$. Matou\v{s}ek and Wagner \cite{matousek2011gromov} provided an exposition of the combinatorial component of Gromov's approach in a combinatorial language, while Karasev \cite{MR2891243} found a very elegant proof of Gromov's bound, which he described as a ``decoded and refined'' version of Gromov's proof. 

The exact value of $c_d$ has been the subject of ongoing research and is unknown, except for the planar case. Basit, Mustafa, Ray and Raza \cite{MR2742969} and successively Matou\v{s}ek and Wagner \cite{matousek2011gromov} improved the B\'{a}r\'{a}ny's theorem in $\RR^3$. Kr\'{a}l', Mach and Sereni \cite{MR2946458} used flag algebras from extremal combinatorics and managed to further improve the lower bound on $c_3$ to more than $0.07480$, whereas the best upper bound known is $0.09375$.

However, in this paper, we are concerned with a colored variant of the point selection problem. Let $P_0, \ldots, P_d$ be $d+1$ disjoint finite sets in $\RR^d$. A \emph{colorful simplex} is the convex hull of $d+1$ points each of which comes from a distinct $P_i$. For the colored point selection problem, we are concerned with the point(s) contained in many colorful simplices. Karasev proved:
\begin{theorem}[Karasev \cite{MR2891243}]\label{thm: karasev}
Given a family of $d+1$ absolutely continuous probability measures $\mm = (m_0, \ldots, m_d)$ on $\RR^d$, an $\mm$-simplex\footnote{An $\mm$-simplex is actually a simplex-valued random variable.} is the convex hull of $d+1$ points $v_0, \ldots, v_d$ with each point $v_i$ sampled independently according to probability measure $m_i$. There exists a point of $\RR^d$ that is contained in an $\mm$-simplex with probability $p_d \geq \frac{1}{(d+1)!}$. In addition, if two probability measures coincide, then the probability can be improved to $p_d \geq \frac{2d}{(d+1)(d+1)!}$.
\end{theorem}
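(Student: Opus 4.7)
The plan is to reduce the probabilistic statement to a combinatorial event---whether a distinguished ``pivot'' $c \in \RR^d$ lies inside $\conv(v_0,\dots,v_d)$---whose probability can be controlled once $c$ is chosen well. By a standard approximation step, I may assume each $m_i$ has a smooth density in general position, so that the membership $c \in \conv(v_0,\dots,v_d)$ is determined purely by the combinatorial orientation class of the displaced tuple $(v_0-c,\dots,v_d-c)$: namely, $c$ lies in the simplex iff the unique affine dependence among these $d+1$ vectors has all coefficients of the same sign, a condition readable off the signs of the $d+1$ facet determinants.

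For the basic bound $p_d \geq \frac{1}{(d+1)!}$, I would produce the pivot $c$ by a topological argument in the spirit of Gromov's cocycle method or Karasev's equivariant reformulation. The goal is to locate $c$ so that the ``good'' orientation classes around $c$ (those making $c$ a positive convex combination of $v_0,\dots,v_d$) collectively capture at least $\frac{1}{(d+1)!}$ of the probability mass of the joint law. This step requires an equivariant Borsuk--Ulam-type fixed-point statement applied to an appropriate configuration space; the bound then falls out of a combinatorial decomposition into the $(d+1)!$ possible orientation classes around $c$, combined with independence of the coordinates $v_0,\dots,v_d$.

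For the refined bound $p_d \geq \frac{2d}{(d+1)(d+1)!}$ when $m_0 = m_1$, I would exploit the extra $S_2$-symmetry of the joint law coming from the coincident pair. Symmetrizing over the swap $v_0 \leftrightarrow v_1$ pairs up orientation classes that differ only by this transposition, and this additional symmetry tightens the equivariant topological constraint that locates the pivot $c$. Propagating the tighter constraint through the combinatorial decomposition yields the improvement by the factor $\frac{2d}{d+1}$ over the basic bound, with the $2d$ coming from a careful count of which symmetrized orientation classes remain ``good'' relative to $c$.

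The main obstacle is the topological step that locates the pivot $c$; this is the substantive non-elementary ingredient and requires a carefully chosen equivariant map from a configuration space to a sphere or projective space to which a Borsuk--Ulam-type theorem can be applied. Everything downstream---the combinatorial decomposition into orientation classes and the extraction of the probability bound---is relatively routine once $c$ is in hand, though the symmetrization for the coincident-measure case needs care to track sign changes correctly under the relevant permutations and to avoid double-counting orientations.
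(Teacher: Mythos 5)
The statement you are asked to prove is cited from Karasev's paper and is not re-proved in this paper; what this paper does prove is the Main Theorem, a strengthening of the second bound that drops the hypothesis that two measures coincide, using a technique very close to Karasev's. Against that background your proposal has a genuine gap: you explicitly defer the single step that carries all the content---``The main obstacle is the topological step that locates the pivot $c$''---without specifying the configuration space, the equivariant map, the group action, or the Borsuk--Ulam-type statement you would invoke, nor how the bound $\frac{1}{(d+1)!}$ would drop out of it. As written, the proposal is a list of techniques that might be relevant rather than an argument.

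Beyond incompleteness, the scaffolding you set up does not match how Karasev (or this paper) actually proceeds, and it is not clear it can be made to work. Karasev's proof, mirrored in Section~2 here, is a proof by contradiction: one assumes no point is covered with probability $p_d$, triangulates $S^d$, and then builds a continuous map $f$ on $\cone(\TT)^{\leq d}$ inductively over the skeleton, at each stage ``filling in'' a cone over a $k$-face against a carefully chosen center $v_k^\sigma$ so that an explicit probability estimate (the ``economical'' condition) propagates up the dimensions. The contradiction comes from a degree-mod-$2$ computation on the top-dimensional cells, not from a single equivariant fixed-point theorem. There is no one pivot $c$ that works globally; rather, a different auxiliary point is chosen for each face of each dimension, and the $\frac{1}{(d+1)!}$ (respectively $\frac{2d}{(d+1)(d+1)!}$) emerges from the recursion $(k+1)!(p_d-\epsilon+k\delta)$ and the final parity argument, not from a count of orientation classes. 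Relatedly, your claim that membership of $c$ in the simplex is governed by $(d+1)!$ ``orientation classes'' conflates the $S_{d+1}$-action with the sign patterns of the $d+1$ facet determinants, which live in $\{\pm 1\}^{d+1}$; the class structure you would need is not the one you describe, and the classes are nowhere near equiprobable, so even if a pivot $c$ were produced the downstream counting would not yield the stated constant. To repair the proposal you would need either to actually carry out the equivariant argument you gesture at (specifying the space and the theorem), or to switch to the inductive degree-theoretic construction that Karasev and this paper use.
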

By a standard argument which we will provide immediately, a result on the colored point selection problem follows:
\begin{corollary}\label{cor: karasev}
  If $P_0, \ldots, P_d$ each contains $n$ points, then there exists a point that is contained in at least $\frac{1}{(d+1)!}\cdot n^{d+1}$ colorful simplices.
\end{corollary}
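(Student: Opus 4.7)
The plan is to smear each finite set $P_i$ into an absolutely continuous measure, invoke Theorem~\ref{thm: karasev}, and then pass to a limit as the smearing shrinks.

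For each sufficiently small $\epsilon > 0$ (so that the balls below are pairwise disjoint), let $m_i^\epsilon$ denote the uniform probability measure on $\bigcup_{p \in P_i} B(p, \epsilon)$. Then sampling $v_i \sim m_i^\epsilon$ is equivalent to choosing a uniformly random $p \in P_i$ followed by a uniform perturbation within $B(p, \epsilon)$. Applying Theorem~\ref{thm: karasev} to $\mm^\epsilon := (m_0^\epsilon, \ldots, m_d^\epsilon)$ produces a point $x_\epsilon \in \RR^d$ that lies in a random $\mm^\epsilon$-simplex with probability at least $\frac{1}{(d+1)!}$. Any such $x_\epsilon$ sits within distance $\epsilon$ of $\conv(P_0 \cup \cdots \cup P_d)$, so the family $\{x_\epsilon\}$ is bounded and a subsequence $x_{\epsilon_k} \to x^* \in \RR^d$ exists by Bolzano--Weierstrass.

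Enumerate the $n^{d+1}$ colorful simplices as $\Delta_\alpha = \conv(p_0^\alpha, \ldots, p_d^\alpha)$ with $p_i^\alpha \in P_i$. Conditioning the $\mm^\epsilon$-sample on the choice of balls, the probability bound rewrites as
$$\frac{1}{(d+1)!} \;\leq\; \frac{1}{n^{d+1}} \sum_\alpha q_\alpha^\epsilon(x_\epsilon),$$
where $q_\alpha^\epsilon(y) := \prob{y \in \conv(v_0, \ldots, v_d)}$ with each $v_i$ uniform in $B(p_i^\alpha, \epsilon)$. If $x^* \notin \Delta_\alpha$, set $\delta := d(x^*, \Delta_\alpha) > 0$; for all large enough $k$, the point $x_{\epsilon_k}$ is at distance at least $\delta/2 > \epsilon_k$ from $\Delta_\alpha$, hence outside every simplex with vertices in the $\epsilon_k$-balls around the $p_i^\alpha$, so $q_\alpha^{\epsilon_k}(x_{\epsilon_k}) = 0$ eventually. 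Taking $\limsup$ through the finite sum, and using $\limsup_k q_\alpha^{\epsilon_k}(x_{\epsilon_k}) \leq \mathbf{1}[x^* \in \Delta_\alpha]$, yields $\#\{\alpha : x^* \in \Delta_\alpha\} \geq n^{d+1}/(d+1)!$, as required.

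The main subtlety I expect is the passage to the limit itself: the weak limit of the $m_i^\epsilon$ is a sum of point masses, which falls outside the hypothesis of Theorem~\ref{thm: karasev}, so the theorem cannot be applied to the limit directly. Applying it at each positive $\epsilon$ and exploiting the upper semi-continuity of the simplex-count function $f(x) := \#\{\alpha : x \in \Delta_\alpha\}$ at $x^*$ sidesteps this. Boundary cases in which $x^*$ happens to lie on $\partial \Delta_\alpha$ are harmless, because the argument only ever uses the one-sided bound $\limsup q_\alpha^\epsilon(x_\epsilon) \leq \mathbf{1}[x^* \in \Delta_\alpha]$.
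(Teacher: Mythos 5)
Your proposal is correct and takes essentially the same route as the paper's own argument (which is given for Corollary~\ref{thm: discrete} but applies verbatim here): mollify each $P_i$ into an absolutely continuous measure, invoke the measure-theoretic theorem, pass to a limit point of the selected points, and observe that only the simplices containing the limit can contribute asymptotically. The only cosmetic difference is your choice of uniform measures on small balls in place of the paper's smooth bump-function mollifier, which is cleaner and makes the conditioning decomposition $\frac{1}{(d+1)!}\leq \frac{1}{n^{d+1}}\sum_\alpha q_\alpha^\epsilon(x_\epsilon)$ slightly more transparent.
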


Our result drops the additional assumption in theorem \ref{thm: karasev}, hence improves corollary \ref{cor: karasev}:
\begin{maintheorem}\label{main theorem}
  There is a point in $\RR^d$ that belongs to an $\mm$-simplex with probability $p_d \geq \frac{2d}{(d+1)(d+1)!}$.
\end{maintheorem}

\begin{corollary}\label{thm: discrete}
  There exists a point that is contained in at least $\frac{2d}{(d+1)(d+1)!}\cdot n^{d+1}$ colorful simplices.
\end{corollary}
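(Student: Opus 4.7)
The plan is to derive Corollary \ref{thm: discrete} from the Main Theorem via a standard mollification argument. Concretely, fix a compactly supported mollifier $\phi_\epsilon$, say the normalized indicator of the open ball $B_\epsilon(0)$, and define $m_i^\epsilon := \frac{1}{n}\sum_{p \in P_i}\delta_p * \phi_\epsilon$. Each $m_i^\epsilon$ is absolutely continuous, so the Main Theorem applied to $\mm^\epsilon := (m_0^\epsilon, \ldots, m_d^\epsilon)$ produces, for every $\epsilon > 0$, a point $x_\epsilon \in \RR^d$ such that $\prob{x_\epsilon \in \conv(v_0, \ldots, v_d)} \geq \frac{2d}{(d+1)(d+1)!}$ when $v_i \sim m_i^\epsilon$ independently. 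Since sampling $v_i \sim m_i^\epsilon$ amounts to choosing $p_i$ uniformly from $P_i$ and then adding an independent noise $\xi_i$ of density $\phi_\epsilon$, this rewrites as
\begin{equation*}
\frac{2d}{(d+1)(d+1)!} \leq \frac{1}{n^{d+1}} \sum_{(p_0, \ldots, p_d)} I_\epsilon(x_\epsilon; p_0, \ldots, p_d),
\end{equation*}
where $I_\epsilon(x; p_0, \ldots, p_d) := \prob{x \in \conv(p_0 + \xi_0, \ldots, p_d + \xi_d)} \in [0,1]$ and the sum ranges over the $n^{d+1}$ colorful tuples drawn from $P_0 \times \cdots \times P_d$.

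Next I would pass to the limit by compactness. Because a perturbed simplex is contained in $\conv(p_0, \ldots, p_d) + B_\epsilon(0)$, we may assume $x_\epsilon$ lies in the bounded set $\conv\bigl(\bigcup_i P_i\bigr) + B_\epsilon(0)$, and extract a subsequence $\epsilon_k \to 0$ along which $x_{\epsilon_k}$ converges to some $x^* \in \conv\bigl(\bigcup_i P_i\bigr)$. For a fixed colorful tuple with simplex $\sigma := \conv(p_0, \ldots, p_d)$, the same inclusion shows $I_\epsilon(x; p_0, \ldots, p_d) = 0$ whenever $\mathrm{dist}(x, \sigma) > \epsilon$; hence, if $x^* \notin \sigma$, then $I_{\epsilon_k}(x_{\epsilon_k}; p_0, \ldots, p_d) = 0$ for all sufficiently large $k$. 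Because there are only finitely many colorful tuples, a single index $k$ works for all of them simultaneously, and bounding each surviving term by $1$ yields
\begin{equation*}
\frac{2d}{(d+1)(d+1)!}\, n^{d+1} \leq \sum_{(p_0, \ldots, p_d)} I_{\epsilon_k}(x_{\epsilon_k}; p_0, \ldots, p_d) \leq \#\{(p_0, \ldots, p_d) : x^* \in \conv(p_0, \ldots, p_d)\},
\end{equation*}
which is precisely the content of Corollary \ref{thm: discrete}.

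The main subtlety to watch for is that the limit point $x^*$ might be grazed by many perturbed simplices without lying in the corresponding honest simplices; however, the uniform bound $I_\epsilon \leq 1$ combined with the closedness of each $\sigma$ absorbs this cleanly, so I do not expect any finer boundary analysis to be required. A potential alternative is to smooth with a Gaussian kernel rather than a compactly supported one, in which case the support argument is replaced by a tail estimate, but the conclusion is the same.
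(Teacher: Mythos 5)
Your proof is correct and takes essentially the same mollification-plus-compactness route as the paper: convolve each empirical measure with a compactly supported kernel, apply the Main Theorem to the smoothed measures, extract a convergent subsequence of witness points, and use the finite collection of colorful simplices to pass to the limit. The only cosmetic difference is that the paper indexes the mollification by $1/n$ (reusing the letter $n$ somewhat awkwardly) while you use a separate parameter $\epsilon$ and phrase the final step via the indicator-like quantity $I_\epsilon$; the underlying argument is identical.
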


\begin{figure}[h]
  \centering
  \begin{tikzpicture}[thick,scale=0.6,inner sep=1mm]
    \definecolor{red}{rgb}{.859375,.265625,.21484375}
    \definecolor{green}{rgb}{.0625,.61328125,.34765625}
    \definecolor{blue}{rgb}{.26171875,.51953125,.95703125}
    \coordinate (A1) at (9.9,0.8);
    \coordinate (A2) at (9.0,7.8);
    \coordinate (A3) at (12.6,3.2);
    \coordinate (B1) at (8.1,5.2);
    \coordinate (B2) at (2.1,5.5);
    \coordinate (B3) at (0.1,7.7);
    \coordinate (C1) at (6.7,7.0);
    \coordinate (C2) at (14.2,8.3);
    \coordinate (C3) at (6.6,2.4);
    \coordinate (X) at (6.2,6.2);
    \draw[fill=gray,opacity=0.1] (A3) -- (B3) -- (C1) -- cycle;
    \draw[fill=gray,opacity=0.1] (A3) -- (B3) -- (C2) -- cycle;
    \draw[fill=gray,opacity=0.1] (A1) -- (B3) -- (C1) -- cycle;
    \draw[fill=gray,opacity=0.1] (A1) -- (B3) -- (C2) -- cycle;
    \draw[fill=gray,opacity=0.1] (A1) -- (B2) -- (C1) -- cycle;
    \draw[fill=gray,opacity=0.1] (A2) -- (B2) -- (C3) -- cycle;
    \node[circle,draw=red!60,fill=red!20] at (A1) {};
    \fill[fill=red!60] (A1) circle (2pt);
    \node[circle,draw=red!60,fill=red!20] at (A2) {};
    \fill[fill=red!60] (A2) circle (2pt);
    \node[circle,draw=red!60,fill=red!20] at (A3) {};
    \fill[fill=red!60] (A3) circle (2pt);
    \node[circle,draw=green!60,fill=green!20] at (B1) {};
    \fill[fill=green!60] (8,5.2) circle (2pt);
    \fill[fill=green!60] (8.2,5.2) circle (2pt);
    \node[circle,draw=green!60,fill=green!20] at (B2) {};
    \fill[fill=green!60] (2,5.5) circle (2pt);
    \fill[fill=green!60] (2.2,5.5) circle (2pt);
    \node[circle,draw=green!60,fill=green!20] at (B3) {};
    \fill[fill=green!60] (0,7.7) circle (2pt);
    \fill[fill=green!60] (0.2,7.7) circle (2pt);
    \node[circle,draw=blue!60,fill=blue!20] at (C1) {};
    \fill[fill=blue!60] (6.6,6.95) circle (2pt);
    \fill[fill=blue!60] (6.8,6.95) circle (2pt);
    \fill[fill=blue!60] (6.7,7.1) circle (2pt);
    \node[circle,draw=blue!60,fill=blue!20] at (C2) {};
    \fill[fill=blue!60] (14.1, 8.25) circle (2pt);
    \fill[fill=blue!60] (14.3, 8.25) circle (2pt);
    \fill[fill=blue!60] (14.2, 8.4) circle (2pt);
    \node[circle,draw=blue!60,fill=blue!20] at (C3) {};
    \fill[fill=blue!60] (6.5, 2.35) circle (2pt);
    \fill[fill=blue!60] (6.6, 2.5) circle (2pt);
    \fill[fill=blue!60] (6.7, 2.35) circle (2pt);
    \node[rectangle,draw=black!60,fill=black!20] at (X) {};
  \end{tikzpicture}
  \caption{$3$ red points, $3$ green points and $3$ blue points are placed in the plane. The point marked by a square is contained in 6 ($=\frac{2}{9}\cdot 3^3$) colorful triangles.}
\end{figure}
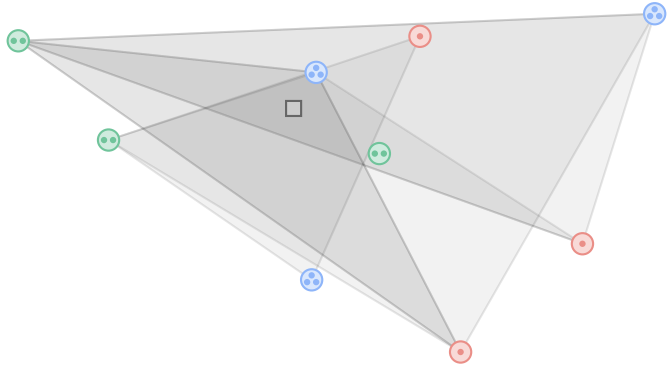

\begin{proof}[Proof of corollary \ref{thm: discrete} from the main theorem]
  Given $d+1$ sets $P_0, \ldots, P_d$ in $\RR^d$ each of which contains $n$ points. Let $\Psi\colon \RR^{d}\to\RR$ be the \emph{bump function} defined by $\Psi(x_1,\ldots,x_d)=\prod_{i=1}^d\psi(x_i)$, where $\psi(x)={e^{-1/(1-x^2)}}\mathbf{1}_{\abs{x}<1}$, and set $\Psi_n(x_1,\ldots,x_d)=n^{d}\Psi(nx_1,\ldots,nx_d)$ for $n\in\NN$. It is a standard fact that $\Psi$ and $\Psi_n$ are absolutely continuous probability measures supported on $[-1,1]^{d}$ and $[-1/n,1/n]^{d}$ respectively.
  
  For each $n\in\NN$ and $0\leq k\leq d$, define $m_k^{(n)}(x):=\frac{1}{n}\sum_{p\in P_k}\Psi_n(x-p)$ for $x\in\RR^d$. Note that $m_k^{(n)}$ is an absolutely continuous probability measure supported on the Minkowski sum of $P_k$ and $[-1/n, 1/n]^d$. Let $\mm^{(n)}$ be the family of $d+1$ probability measures $m_0^{(n)},\ldots, m_d^{(n)}$. By the main theorem, there is a point $p^{(n)}$ of $\RR^d$ that belongs to an $\mm^{(n)}$-simplex with probability at least $\frac{2d}{(d+1)(d+1)!}$.
  
  Because no point in a certain neighborhood of infinity is contained in any $\mm^{(n)}$-simplex, the set $\{p^{(n)}:n\in\NN\}$ is bounded, and consequently the set has a limit point $p$. Suppose $p$ is contained in $N$ colorful simplices. Let $\epsilon > 0$ be the distance from $p$ to all the colorful simplices that do not contain $p$. Choose $n$ large enough such that $1/n\ll\epsilon$ and $\abs{p^{(n)}-p}\ll\epsilon$. By the choice of $n$, if $p$ is not contained in a colorful simplex spanned by $v_0, \ldots, v_d$, then $p^{(n)}$ is not contained the convex hull of $v_0', \ldots, v_d'$ for all $v_i'\in v_i+[-1/n,1/n]^d$. This implies that the probability that $p^{(n)}$ is contained in an $\mm^{(n)}$-simplex is at most $\frac{N}{n^{d+1}}$. Hence $p$ is the desired point contained in $N\geq \frac{2d}{(d+1)(d+1)!}\cdot n^{d+1}$ colorful simplices.
\end{proof}

Readers who are familiar with Karasev's work \cite{MR2891243} would notice that our proof of the main theorem heavily relies on his arguments. The author is deeply in debt to him.

\section{Proof of the Main Theorem}

In this section, we provide the proof of the main theorem. The topological terms in the proof are standard, and can be found in \cite{matousek2003using}. In addition to the notion of an $\mm$-simplex, in the proof, we will often refer to an $(m_k,\ldots, m_d)$-face which means the convex hull of $d-k+1$ points $v_k,\ldots,v_d$ with each point $v_i$ sampled independently according to probability measure $m_i$. An $\mm$-simplex and an $(m_k,\ldots,m_d)$-face are both set-valued random variables.

\begin{proof}[Proof of the main theorem]

To obtain a contradiction, we suppose that for any point $v$ in $\RR^d$, the probability that $v$ belongs to an $\mm$-simplex is less than $p_d := \frac{2d}{(d+1)(d+1)!}$. Since this probability, as a function of point $v$, is continuous and uniformly tends to $0$ as $v$ goes to infinity, there is an $\epsilon > 0$ such that $v$ is contained in an $\mm$-simples with probability at most $p_d-\epsilon$ for all $v$ in $\RR^d$.

Let $S^d := \RR^d\cup\{\infty\}$ be the one-point compactification of the Euclidean space $\RR^d$. Take $\delta = \epsilon/d$. Choose a finite triangulation\footnote{A triangulation $\TT$ of a topological space $X$ is a simplicial complex $\KK$, homeomorphic to $X$, together with a homeomorphism $h\colon\norm{\KK}\to X$. Since the finite triangulation of interest is an extension of the triangulation of a $d$-simplex $X$ in $\RR^d$ and $h$ is an identity map, we will freely use topological notions such as ``a $k$-face (as a subset of $S^d$)'' instead of ``the image of a $k$-face in $\KK$ under $h$''. With such abuse of language, we can avoid going back and forth between the simplicial complex and the topological space.} $\TT$ of $S^d$ with one of the $d$-simplices containing $\infty$ such that for $0 < k \leq d$, any $k$-face of $\TT$ intersects an $(m_k,\ldots,m_d)$-face with probability less than $\delta$ and that the measure of any $d$-face of $\TT$ under $\left(m_{d-1}+m_d\right)/2$ is less than $\delta$. This can be done by taking a sufficiently fine triangulation of $S^2$ with one $d$-simplex having $\infty$ in its relative interior.

\begin{figure}
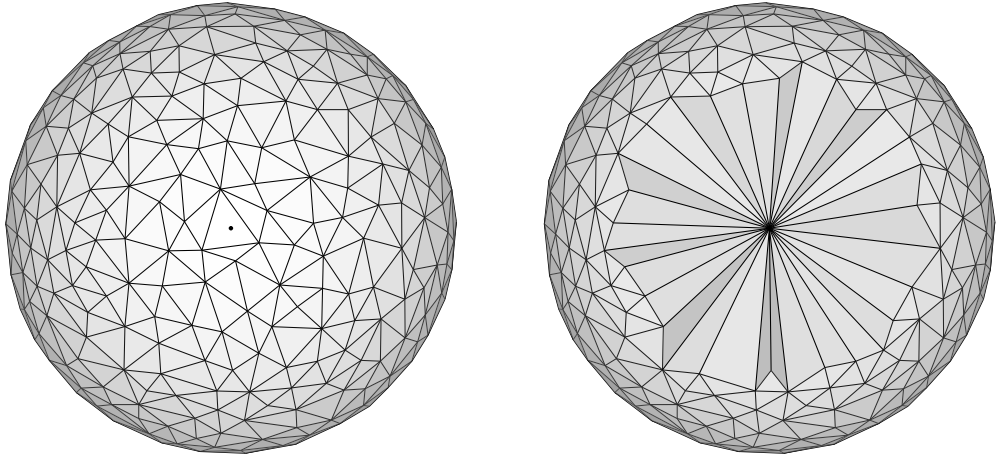

  \centering

  \caption{The bird's-eye view of a triangulation of $S^2$ with a $2$-simplex containing $\infty$ and the cone over part of the triangulation.}
\end{figure}

We use $\cone(\cdot)$ as the cone functor\footnote{The cone over a space $X$ is the quotient space $\cone(X) := \left(X\times[0,1]\right)/\left(X\times\{1\}\right)$. The apex is the equivalence class $\{(x,1): x\in X\}$.} with apex $O$. A triangulation $\TT$ of $S^d$ naturally extends to a triangulation $\cone(\TT)$ of $\cone(S^d)$. We denote the $k$-skeleton\footnote{The $k$-skeleton of a simplicial complex $\Delta$ consists of all simplices of $\Delta$ of dimension at most $k$.} of $\TT$ and $\cone(\TT)$ by $\TT^{\leq k}$ and $\cone(\TT)^{\leq k}$ respectively.

We are going to define a continuous map $f\colon \cone(\TT)^{\leq d}\to S^d$. Put $f(x)=x$ for all $x\in S^d = \norm{\TT} \subset \norm{\cone(\TT)^{\leq d}}$, and set $f(O) = \infty$. We proceed to define $f$ on $\cone(\sigma)$ for all the $k$-faces $\sigma$ of $\TT$ inductively on dimension $k$ of $\sigma$ while we maintain the property that the image of the boundary of $\cone(\sigma)$ under $f$, that is $f(\partial \cone(\sigma))$, intersects an $(m_k,\ldots, m_d)$-face with probability at most $(k+1)!(p_d-\epsilon+k\delta)$. We say $f$ is \emph{economical} over a $k$-face $\sigma$ of $\TT^{\leq{d-1}}$ if $f$ and $\sigma$ satisfy the above property. Unlike Karasev \cite{MR2891243}, our inductive construction of $f$ follows the same pattern until $k=d-2$ instead of $d-1$. The main innovation of this proof is a different construction for $k=d-1$, which enables us to remove the additional assumption in theorem \ref{thm: karasev}.

Note that for any $0$-face $\sigma$ in $\TT$, $f(\partial\cone(\sigma))=f(\{\sigma, O\})=\{\sigma, \infty\}$. According to the assumption at the beginning of the proof, $f(\partial\cone(\sigma))$ intersects an $(m_0,\ldots, m_d)$-face, that is, an $\mm$-simplex, with probability at most $p_d-\epsilon$. Therefore $f$ is economical over $0$-faces of $\TT$. This finishes the first step.
  
Suppose $f$ is already defined on $\cone(\TT)^{\leq k}$ and it is economical over $k$-faces of $\TT$. We are going to extend the domain of $f$ to $\cone(\TT)^{\leq k+1}$. Indeed, we only need to define $f$ on $\cone(\sigma)$ for every $k$-face $\sigma$ of $\TT$.

Take any $k$-face $\sigma$ of $\TT$. Suppose convex hull of $v_k, \ldots, v_d$, denoted by $\conv(v_k, \ldots, v_d)$, is an $(m_k,\ldots,m_d)$-face. Notice that the following statements are equivalent:
\begin{itemize}
  \item $f(\partial\cone(\sigma))$ intersects $\conv(v_k, \ldots, v_d)$;
  \item for some $v\in f(\partial\cone(\sigma))$, the ray with initial point $v$ in the direction $\vv{v_kv}$ intersects $\conv(v_{k+1},\ldots,v_d)$.
\end{itemize}
We call the union of such rays the \emph{shadow} of $f(\partial\cone(\sigma))$ centered at $v_k$. Since $f$ is economical over $\sigma$, the probability for an $(m_k, \ldots, m_d)$-face to meet $f(\partial\cone(\sigma))$ is at most $(k+1)!(p_d-\epsilon+k\delta)$, and so there exists $v_k^\sigma\in\RR^d$ such that the shadow of $f(\partial\cone(\sigma))$ centered at $v_k^\sigma$ intersects $\conv(v_{k+1},\ldots,v_d)$ with probability at most $(k+1)!(p_d-\epsilon+k\delta)$.

Now, we define $f$ on $\cone(\sigma)$. First, let $g$ be the homeomorphism from $\cone(\sigma)$ onto the cone over $\partial\cone(\sigma)$ with apex $c$ such that $g$ is an identity on $\partial\cone(\sigma)$. This can be done because $\cone(\sigma)$ is homeomorphic to a $(k+1)$-simplex $\Delta$ and it is easy to find a homeomorphism from $\Delta$ to $\cone(\partial\Delta)$ that keeps $\partial\Delta$ fixed.

\begin{figure}[h]
  \centering
  \begin{tikzpicture}[scale=1]
    \draw (1,0) -- (0,1);
    \node[circle, fill=black, outer sep=2pt, inner sep = 1pt, label=right:$e_0$] at (1,0) {};
    \node[circle, fill=black, outer sep=2pt, inner sep = 1pt, label=above:$e_1$] at (0,1) {};
  \end{tikzpicture}\quad\quad\quad
  \begin{tikzpicture}[scale=1]
    \node[circle, fill=black, outer sep=2pt, inner sep = 1pt, label=right:$e_0$] at (1,0) {};
    \node[circle, fill=black, outer sep=2pt, inner sep = 1pt, label=above:$e_1$] at (0,1) {};
  \end{tikzpicture}\quad\quad\quad
  \begin{tikzpicture}[scale=1]
    \draw (0,1) -- (0,0) -- (1,0);
    \node[circle, fill=black, outer sep=2pt, inner sep = 1pt, label=right:$e_0$] at (1,0) {};
    \node[circle, fill=black, outer sep=2pt, inner sep = 1pt, label=above:$e_1$] at (0,1) {};
    \node[circle, fill=black, outer sep=2pt, inner sep = 1pt, label=-145:$c$] at (0,0) {};
  \end{tikzpicture}\quad\quad\quad
  \begin{tikzpicture}[scale=1]
    \draw (0,1) -- (0,0) -- (1,0) -- cycle;
    \node[circle, fill=black, outer sep=2pt, inner sep = 1pt, label=right:$e_0$] at (1,0) {};
    \node[circle, fill=black, outer sep=2pt, inner sep = 1pt, label=above:$e_1$] at (0,1) {};
    \node[circle, fill=black, outer sep=2pt, inner sep = 1pt, label=-145:$c$] at (0,0) {};
    \draw[->] (0.45,0.45) -- (0.05,0.05);
    \draw[->] (0.62,0.29) -- (0.38,0.05);
    \draw[->] (0.29,0.62) -- (0.05,0.38);
    \draw[->] (0.79,0.13) -- (0.71,0.05);
    \draw[->] (0.13,0.79) -- (0.05,0.71);
  \end{tikzpicture}  
  \caption{An illustration of an $1$-simplex $\Delta$, $\partial\Delta$, $\cone(\partial\Delta)$ and a homeomorphism from $\Delta$ to $\cone(\partial\Delta)$.}
\end{figure}
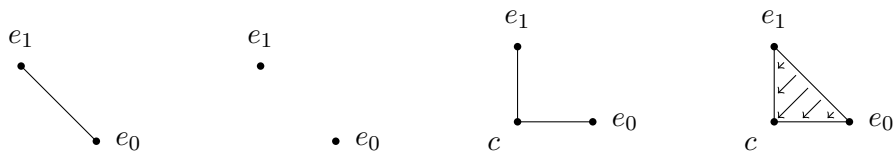

Next, note that every point $w$ in $\cone(\sigma)$ except $c$ is on a line segment $[v,c)$ for a unique point $v$ on $\partial\cone(\sigma)$. If $t=\overline{vw}/\overline{wc}\in[0,\infty)$, then put $h(w)=\vv{f(v)}+t\cdot\vv{v_k^\sigma f(v)}$. In addition, set $h(c)=\infty$. The function $h$ maps $[v, c)$ onto $[f(v), v_k^\sigma)$ linearly and then takes the inversion centered at $v_k^\sigma$ with radius $\overline{v_k^\sigma f(v)}$ so that $[f(v),v_k^\sigma)$ gets mapped onto the ray with the initial point $f(v)$ in the direction $\vv{v_k^\sigma f(v)}$. Evidently, $h$ is a continuous map from $\cone(\partial\cone(\sigma))$ onto the shadow of $f(\partial\cone)$ centered at $v_k^\sigma$ that coincides with $f$ on $\partial\cone(\sigma)$.

\begin{figure}[h]
  \centering
  \begin{tikzpicture}
    \begin{scope}[xshift=-50]
      \draw (0,0) -- (-1,0);
      \draw (0,-\h) -- (0,\h);
      \node[circle, fill=black, outer sep=2pt, inner sep = 1pt, label=right:$v$] at (0,0) {};
      \node[circle, fill=black, outer sep=2pt, inner sep = 1pt, label=left:$c$] at (-1,0) {};
      \node[circle, fill=black, outer sep=2pt, inner sep = 1pt, label=below:$w$] at (-0.5,0) {};
      \node[label=below:$\partial\cone(\sigma)$] at (0,-1) {};
      \draw[fill=gray,opacity=0.1] (0,-\h) -- (0,\h) -- (-1,0) -- cycle;
      \draw[dashed, ->] (-0.4,0.1) to [bend left] (2.9, 0.1);
    \end{scope}
    \begin{scope}[xshift=50]
      \draw[dashed] (-1,0) -- (2,0);
      \draw (0,-\h) -- (0,\h);
      \node[circle, fill=black, outer sep=2pt, inner sep = 1pt, label=left:$v^\sigma_1$] at (-1,0) {};
      \node[circle, fill=black, outer sep=2pt, inner sep = 1pt, label=-45:$f(v)$] at (0,0) {};
      \node[circle, fill=black, outer sep=2pt, inner sep = 1pt, label=-45:$h(w)$] at (1,0) {};
      \node[circle, fill=black, outer sep=2pt, inner sep = 1pt, label=below:$w'$] at (-0.5,0) {};
      \node[label=below:$f(\partial\cone(\sigma))$] at (0,-1) {};
      \draw[dashed, ->] (-0.4,0.1) to [bend left] (0.9, 0.1);
      \draw[fill=gray,opacity=0.1] (0,-\h) -- (0,\h) -- (2,3*\h) -- (2,-3*\h) -- cycle;
      \end{scope}
  \end{tikzpicture}  
  \caption{The illustration shows a cone over part of $\partial\cone(\sigma)$ with apex $c$ and a point $v$ on the boundary, and how a point $w$ on the line segment $[v, c)$ are mapped under $h$.}
\end{figure}
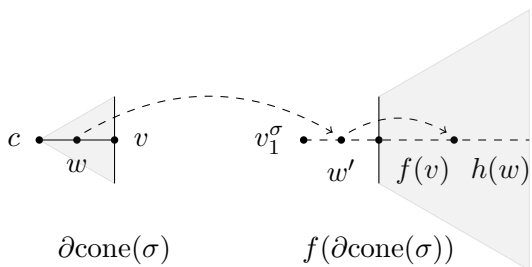

Define $f$ on $\cone(\sigma)$ to be the composition of $g$ and $h$: \[
\xymatrix{
\partial\cone(\sigma)\ar@{^{(}->}[d]\ar[r]^= & \partial\cone(\sigma)\ar@{^{(}->}[d]\ar[r]^f & f(\partial\cone(\sigma))\ar@{^{(}->}[d]\\
\cone(\sigma) \ar[r]^(0.4)g & \cone\left(\partial\cone(\sigma)\right)\ar[r]^(0.3)h & \text{the shadow of }f(\partial\cone(\sigma))\text{ centered at }v_k^\sigma.}
\smallskip  
\]
According to the commutative diagram above, $f$ is well-defined on $\cone(\sigma)$ in the sense that it is compatible with its definition on $\cone(\TT)^{\leq k}$. We use the phrase ``fill in the boundary of $\cone(\sigma)$ against the center $v_k^\sigma$'' to represent the above process that extends the domain of $f$ from $\partial\cone(\sigma)$ to $\cone(\sigma)$.

To complete the inductive step, we must demonstrate that $f$ is economical over $(k+1)$-faces of $\TT$. Pick any $(k+1)$-face $\tau$ of $\TT$. Let $\sigma_0, \ldots, \sigma_{k+1}$ be the $k$-faces of $\tau$. Observing that $f(\partial\cone(\tau)) = f(\tau\cup\cone(\partial\tau))=\tau\cup f(\cone(\sigma_0))\cup\ldots\cup f(\cone(\sigma_{k+1}))$ and that $f(\cone(\sigma_i))$ is the shadow of $f(\partial\cone(\sigma_i))$ centered at $v_k^{\sigma_i}$ which intersects an $(m_{k+1},\ldots,m_d)$-face with probability at most $(k+1)!(p_d-\epsilon+k\delta)$, we obtain that the probability for an $(m_{k+1}, \ldots, m_d)$-face to intersect $f(\partial\cone(\tau))$ is dominated by $\delta+(k+2)(k+1)!(p_d-\epsilon+k\delta)\leq (k+2)!(p_d-\epsilon+(k+1)\delta)$.

We have so far defined a continuous map $f$ on $\cone(\TT)^{\leq d-1}$ such that for any $(d-1)$-face $\sigma$ of $\TT$ the probability for an $(m_{d-1}m_d)$-face to intersect $D:=f(\partial\cone(\sigma))$ is at most $d!(p_d-\epsilon+(d-1)\delta)$. We write $f(X)\modtwo := \{y\in f(X):\abs{f^{-1}(y)\cap X}=1\pmod 2\}$ for the set of points in $f(X)$ whose fibers in $X$ have an odd number of points. Set $\bar{m}:=(m_{d-1}+m_d)/2$. We are going to define $f$ on $\cone(\sigma)$ such that $\bar{m}\left(f(\cone(\sigma)) \modtwo\right)$ is less than $\frac{1-\delta}{d+1}$.

Fix a point $s$ in $\RR^d\backslash D$. For any point $t$ in $\RR^d\backslash D$, if a generic piecewise linear path from $s$ to $t$ intersects with $D$ an odd number of times, then put $t$ in $B$, otherwise put it in $A$. Here the number of intersections of a piecewise linear path $L$ and $D$ might not be the cardinality of $L\cap D$. Instead, the number of intersections is precisely $\sum_{x\in L\cap D}\abs{f^{-1}(x)\cap \partial\cone(\sigma)}$, that is, it takes the multiplicity into account. Thus we have partitioned $\RR^d\backslash D$ into $A$ and $B$ such that any generic piecewise linear path from a point in $A$ to a point in $B$ meets $D$ an odd number of times. Suppose $a:=m_{d-1}(A)$, $b:=m_d(A)$ and $x:=\bar{m}(A)=(a+b)/2$. The probability that an $(m_{d-1}m_d)$-face intersects with $D$ is at least $a(1-b)+(1-a)b$. Hence $a(1-b)+(1-a)b < d!(p_d-\epsilon+(d-1)\delta) <2\left(\frac{1-\delta}{d+1}\right)\left(1-\frac{1-\delta}{d+1}\right)$. Because $a(1-b)+(1-a)b=(a+b)-2ab\geq (a+b)-(a+b)^2/2 = 2x(1-x)$, either $x$ or $1-x$ is less than $\frac{1-\delta}{d+1}$. In other words, one of $\bar{m}(A)$ and $\bar{m}(B)$ is less than $\frac{1-\delta}{d+1}$. We may assume that $\bar{m}(B) < \frac{1-\delta}{d+1}$.

Fix a point $c\in A$. Again, we fill in the boundary of $\cone(\sigma)$ against the center $c$. For any generic point $x\in A$, the line segment $[c,x]$ intersects with $D$ an even number of times. For every $v$ on $\partial\cone(\sigma)$, the ray with the initial point $f(v)$ in the direction $\vv{cf(v)}$ covers $x$ once if and only if the line segment $[c,x]$ intersects with $D$ at $f(v)$. Because $f(\cone(\sigma))$ is the union of such rays, the number of times that $x$ is covered by $f(\cone(\sigma))$ is exactly the number of intersections between $[c,x]$ and $D$. This implies that $x$ is not in $f(\cone(\sigma))\modtwo$. Therefore $f(\cone(\sigma))\modtwo$ is a subset of $B\cup D$ almost surely. Noticing that $\bar{m}(D)=0$, the extension of $f$ has the desired property $\bar{m}\left(f(\cone(\sigma))\modtwo\right) < \frac{1-\delta}{d+1}$.

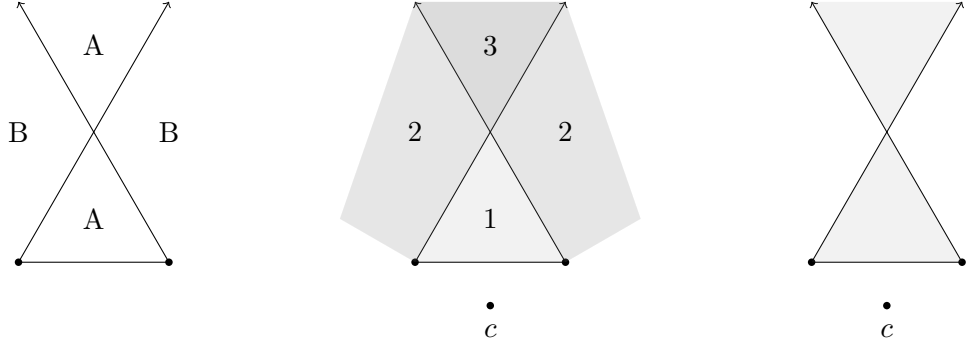
\begin{figure}
  \centering
  \begin{tikzpicture}[scale=1]
    \begin{scope}[xshift = -150]
      \draw[<->] (1, 2/\h) -- (-1,0) -- (1,0) -- (-1, 2/\h);
      \node[circle, fill=black, outer sep=2pt, inner sep = 1pt] at (1,0) {};
      \node[circle, fill=black, outer sep=2pt, inner sep = 1pt] at (-1,0) {};
      \node at (-1,3*\h) {B};
      \node at (1,3*\h) {B};
      \node at (0,\h) {A};
      \node at (0,5*\h) {A};
    \end{scope}
    \begin{scope}
      \draw[<->] (1, 2/\h) -- (-1,0) -- (1,0) -- (-1, 2/\h);
      \node[circle, fill=black, outer sep=2pt, inner sep = 1pt] at (1,0) {};
      \node[circle, fill=black, outer sep=2pt, inner sep = 1pt] at (-1,0) {};
      \node[circle, fill=black, outer sep=2pt, inner sep = 1pt, label=below:$c$] at (0,-\h) {};
      \fill[color=gray, opacity=0.1] (-1, 0) -- (-2,\h) -- (-1, 2/\h) -- (1,2/\h) -- cycle;
      \fill[color=gray, opacity=0.1] (1, 0) -- (2,\h) -- (1, 2/\h) -- (-1,2/\h) -- cycle;
      \fill[color=gray, opacity=0.1] (1, 0) -- (2,\h) -- (1, 2/\h) -- (-1,2/\h) -- (-2, \h) -- (-1, 0) -- cycle;
      \node at (-1,3*\h) {2};
      \node at (1,3*\h) {2};
      \node at (0,\h) {1};
      \node at (0,5*\h) {3};
    \end{scope}
    \begin{scope}[xshift=150]
      \draw[<->] (1, 2/\h) -- (-1,0) -- (1,0) -- (-1, 2/\h);
      \node[circle, fill=black, outer sep=2pt, inner sep = 1pt] at (1,0) {};
      \node[circle, fill=black, outer sep=2pt, inner sep = 1pt] at (-1,0) {};
      \node[circle, fill=black, outer sep=2pt, inner sep = 1pt, label=below:$c$] at (0,-\h) {};
      \fill[color=gray, opacity=0.1] (-1, 0) -- (0,3*\h) -- (1, 0) -- cycle;
      \fill[color=gray, opacity=0.1] (0, 3*\h) -- (-1,2/\h) -- (1, 2/\h) -- cycle;
    \end{scope}
  \end{tikzpicture}  
  \caption{An illustration of the partition, the result of filling in against $c$, and $f(\cone(\sigma))\modtwo$.}
\end{figure}

Pick any $d$-face $\tau$ of $\TT$. Suppose the $(d-1)$-faces of $\tau$ are $\sigma_0,\ldots, \sigma_d$. By a parity argument, we have 
\begin{eqnarray*}
f(\partial\cone(\tau))\modtwo &=& \left[\tau\cup f(\cone(\sigma_0))\cup\ldots\cup f(\cone(\sigma_d))\right]\modtwo\\
&\subset&\tau\cup f(\cone(\sigma_0))\modtwo\cup\ldots\cup f(\cone(\sigma_d))\modtwo.
\end{eqnarray*}
Therefore $\bar{m}\left(f(\partial\cone(\tau))\modtwo\right)$ is less than $\delta+(d+1)\frac{1-\delta}{d+1}=1$, and so the degree of $f$ on $\partial\cone(\tau)$, denoted by $\deg\left(f,{\partial\cone(\tau)}\right)$, is even. Because \[
  \sum_{\tau}\deg\left(f,{\partial\cone(\tau)}\right)= 2\sum_{\sigma}\deg\left(f,{\cone(\sigma)}\right) + \deg\left(f,{\TT}\right) = \deg\left(f,{\TT}\right) \pmod 2,
\]
where the first sum and the second sum are over all $d$-faces and all $(d-1)$-faces of $\TT$ respectively, we know that $\deg\left(f,{\TT}\right)$ is even, which contradicts with the fact that $f$ is identity on $\TT$.
\end{proof}

\subsection*{Acknowledgment}
The author would like to thank Boris Bukh for guidance and fruitful discussions on the B\'{a}r\'{a}ny's theorem. This article would not have been possible without his support. The author is also grateful to Roman Karasev and/or the anonymous referee who read the preliminary version of the paper and pointed out many inaccuracies.

\bibliographystyle{alpha}

\end{document}